\newcommand\e{\varepsilon}
\newcommand\R{\mathbb{R}}
\newcommand\N{\mathbb{N}}
\newcommand\G{\mathcal{G}}
\renewcommand\P{\mathbb{P}}
\newcommand\per{\text{per}}
\newcommand\E{\mathbb{E}}
\renewcommand\S{\mathcal{S}}
\theoremstyle{definition}
\newtheorem{theorem}{Theorem}
\newtheorem{definition}[theorem]{Definition}
\newtheorem{corollary}[theorem]{Corollary}
\newtheorem{proposition}[theorem]{Proposition}
\newcounter{rcnt}[section]
\begin{document}

\renewcommand{\abstractname}{Abstract}

\begin{center}
\huge{Permanental Graphs} \\
\vspace{1em}
\large{Daniel Xiang} \hspace{1em} {Peter McCullagh} \\
\vspace{1em}
{Department of Statistics, University of Chicago}\\
\vspace{1em}
\today \\
\vspace{3em}
\end{center}

\begin{abstract}
The two components for infinite exchangeability of a sequence of distributions $(P_n)$ are (i) consistency, and (ii) finite exchangeability for each $n$. A consequence of the Aldous-Hoover theorem is that any node-exchangeable, subselection-consistent sequence of distributions that describes a randomly evolving network yields a sequence of random graphs whose expected number of edges grows quadratically in the number of nodes. In this note, another notion of consistency is considered, namely, delete-and-repair consistency; it is motivated by the sense in which infinitely exchangeable permutations defined by the Chinese restaurant process (CRP) are consistent. A goal is to exploit delete-and-repair consistency to obtain a nontrivial sequence of distributions on graphs $(P_n)$ that is sparse, exchangeable, and consistent with respect to delete-and-repair, a well known example being the Ewens permutations \cite{tavare}. A generalization of the CRP$(\alpha)$ as a distribution on a directed graph using the $\alpha$-weighted permanent is presented along with the corresponding normalization constant and degree distribution; it is dubbed the Permanental Graph Model (PGM). A negative result is obtained: no setting of parameters in the PGM allows for a consistent sequence $(P_n)$ in the sense of either subselection or delete-and-repair.

\end{abstract}

\section{Introduction}

The two components for infinite exchangeability of a sequence of distributions $(P_n)$ are (i) consistency, and (ii) finite exchangeability for each $n$. From a modeling perspective, exchangeability is an assumption that is natural in a setting where the statistical units are labelled in an arbitrary manner. If the process being studied is a record of the relationships $(i,j) \mapsto X_{ij}$ between ordered pairs of units $(i, j)$, the process $X^\sigma$ after label permutation has components $(i,j) \mapsto X_{\sigma(i), \sigma(j)}$. As a matrix, $X^\sigma = \sigma X \sigma^{-1}$ is obtained from $X$ by permuting rows and columns, i.e., by conjuation by $\sigma\in \S_n$. In this setting, finite exchangeability means that all of the permuted matrices have the same joint distribution.

Subselection consistency is not specific to Boolean matrices, but applies to real-valued matrices and to more general arrays. It requires that for $X \in \{0,1\}^{n\times n}$ distributed according to $P_n$, the top left $(n-1) \times (n-1)$ submatrix of $X$ is distributed according to $P_{n-1}$.
The operation defined by deleting the last row and column of the adjacency matrix does not rely on the fact that the entries of an adjacency matrix are boolean valued. Sampling a sub-network according to subselection amounts to picking a subset of vertices and including only the edges between pairs of vertices in the selected subset. The following calculation, replicated from \cite{networksbook}, demonstrates how edge sparsity, node-exchangeability, and subselection-sampling are at odds with each other.

\begin{proof}[Calculation]
$X\in \{0,1\}^{n\times n}$ is a simple directed graph possibly containing self-loops. $X$ is assumed to be ``sparse'', i.e.
\begin{align*}
\sum_{i,j} X_{ij} = \e n = o(n^2),
\end{align*}
for some $\e > 0$ independent of $n$. Let $\sigma \in \S_n$ be drawn uniformly at random, and put $Y \doteq \sigma X \sigma^{-1}$. In this model, we observe the top left $m\times m$ sub-matrix of $Y$, denoted $Y_{1:m,1:m}$, which is exchangeable according to this construction. Assume $m \ll n$, at the order $m = o(\sqrt{n})$. By the union bound,
\begin{align*}
\P\left(\bigcup_{i,j\leq m} \{Y_{ij} = 1\} \right) &\leq \sum_{i,j\leq m} \P(Y_{ij}=1) \\
&\asymp m^2 \P(Y_{12}=1). \tag{exchangeability}
\end{align*}
The event $Y_{12}=1$ corresponds to having picked two vertices uniformly at random from the $\binom{n}{2} \asymp n^2$ possible pairs, and observing an edge between them. Hence
\begin{align*}
\P(Y_{12}=1) \asymp \frac{\e n }{n^2} = \frac{\e}{n}.
\end{align*}
Plugging this back into the union bound, we find that
\begin{align*}
\P\left(\bigcup_{i,j\leq m} \{Y_{ij} = 1\} \right) \leq \frac{\e m^2}{n} \approx 0. \tag{$m=o(\sqrt{n})$}
\end{align*}
The network we ``observe'' contains no edges with high probability. Put more plainly, we observe no network at all!

\end{proof}


In order to resolve the contradiction suggested by the calculation above, at least one of sparsity, node-exchangeability, or subselection-consistency must be modified. In this note, an alternative notion of consistency is considered, namely, delete-and-repair consistency. It is motivated by the sense in which infinitely exchangeable permutations defined via the Chinese restaurant process (CRP) are consistent.


%

The $\alpha$-weighted permanent is used to generalize the probability function associated to the partitions and permutations generated from the one parameter CRP$(\alpha)$ to probabilities on general directed graphs. Prescribing a probability to a directed graph according to the $\alpha$ permanent of its adjacency matrix automatically yields an exchangeable distribution with tractable calculations for the normalization constant and degree distribution similar to that of the Erd\"{o}s-R\'{e}nyi$(n,p)$ model. The negative result we obtain is that any setting of the parameters for the permanental graphs allows for neither delete-and-repair nor subselection consistency. All proofs are deferred to Section \ref{calculations}.

\section{Permanental Graphs}

The $\alpha$-weighted matrix permanent $\per_\alpha : \R^{n\times n} \to \R$ is a matrix functional defined by,
\begin{align*}
\per_\alpha(G) \doteq \sum_{\sigma \in \S_n} \alpha^{\# \sigma} \prod_{i=1}^n G_{i,\sigma(i)},
\end{align*}
where the sum runs over all permutations $\sigma : [n]\to[n]$, and $\#\sigma$ is the number of cycles. If $G$ is Boolean,  the product $\prod G_{i, \sigma(i)}$ is equal to one if $\sigma$ is contained as a sub-graph in~$G$, and zero otherwise. Thus $\per_1(G)$~is the number of permutations contained as sub-graphs in~$G$, and $\per_\alpha(G)$ is the cycle-weighted count.
The matrix permanent is recovered by setting $\alpha=1$, whereas the determinant is obtained as $\per_{-1}(G) = (-1)^n \det(G)$. In this note, the word graph or $n$-graph means a simple directed graph, with no multiple edges, but possibly containing self-loops. In other words, each $n$-graph is a Boolean matrix of order $n$. 

For each $n\ge 1$, let $\G_n \subset \{0,1\}^{n \times n}$ be any subset of $n$-graphs satisfying the following conditions:
\begin{enumerate}
\item $\G_n$ is closed under conjugation:
\begin{align*}
\sigma \G_n \sigma^{-1} = \G_n, \hspace{2em} \sigma\in\S_n
\end{align*}
\item There exists $G\in \G_n$ and $\sigma\in\S_n$ such that $\sigma \subset G$.
\end{enumerate}
Examples that we have in mind include the whole space, $\G_n = \{0,1\}^{n\times n}$, the permutations $\G_n=\S_n$, permutations having no fixed points for $n\ge 2$, single-cycle permutations, equivalence relations or set partitions as graphs, and so on. Condition 1 means that $\G_n$ is a union of group orbits, while condition 2 excludes trivialities such as graphs having fewer edges than vertices.

Consider the graph distribution
\begin{align}
\label{per}
P_n(G) \propto \textbf{1}_{\{G \in \G_n\}}\per_\alpha(G) = \textbf{1}_{\{G \in \G_n\}} \sum_{\sigma \in \S_n} \alpha^{\# \sigma} \textbf{1}_{\{\sigma \subset G\}},
\end{align}
that is proportional to the $\alpha$-permanent restricted to $\G_n$. Condition 2 gives $\sum_{G\in\G_n} \per_\alpha(G) >0$ when $\alpha > 0$, so the normalizing constant is strictly positive.
Note that $P_n$ is automatically exchangeable, because for any $\tau \in \S_n$,
\begin{align*}
P_n(G^\tau) &\propto \sum_{\sigma \in \S_n} \alpha^{\#\sigma} \prod_{i=1}^n G_{\tau(i),\tau(\sigma(i))} \\
&= \sum_{\sigma \in \S_n} \alpha^{\# \sigma} \prod_{j=1}^{n}G_{j,\tau\sigma\tau^{-1}(j)} \tag{$i = \tau^{-1}(j)$ for some $j$} \\
&= \sum_{\sigma \in \S_n} \alpha^{\# \tau\sigma\tau^{-1}}\prod_{j=1}^{n}G_{j,\tau\sigma\tau^{-1}(j)} \tag{$\# \sigma = \# \tau \sigma\tau^{-1}$} \\
&= \sum_{\sigma \in \S_n} \alpha^{\#\sigma} \prod_{i=1}^n G_{i,\sigma(i)} \tag{sum ranges over all $\sigma \in \S_n$}.
\end{align*}
Consistency in any sense is not immediately clear. When $\G_n$ is taken to be the set of adjacency matrices corresponding to partitions of $[n]$, we have
\begin{align*}
P_n(\pi) \propto \sum_{\sigma \in \S_n} \alpha^{\# \sigma} \textbf{1}_{\{\sigma \subset \pi\}},
\end{align*}
where $\sigma \subset \pi$ means the graph induced by $\sigma$ is a subgraph of the graph induced by $\pi$, i.e. the cycles of the permutation $\sigma$ coincide with the blocks of the partition $\pi$. Further simplification gives
\begin{align*}
P_n(\pi) \propto \alpha^{\# \pi} \cdot \# \{\sigma \in \S_n : \sigma \subset \pi\} = \alpha^{\# \pi}\prod_{j=1}^{\# \pi} (n_j-1)!,
\end{align*}
where $n_j$ are the block sizes of $\pi$, and $\# \pi$ is the number of blocks in $\pi$. It follows from the above formula that the sequence $(P_n)$ coincides with the CRP$(\alpha)$ on partitions when $\G_n$ is taken to be the set of adjacency matrices corresponding to partitions. When $\G_n$ is the set of permutations, similar reasoning shows that $(P_n)$ is the same as the CRP$(\alpha)$ for permutations. For an introduction to the CRP for partitions and permutations, see Section 3.1 of \cite{pitman}. Letting $\G_n = \{0,1\}^{n\times n}$ have unrestricted support, and including an additional ``odds'' parameter $\beta > 0$, the following collection of distributions, called the Permanental Graph Model, is obtained.
\begin{theorem}[Permanental Graph Model]
\label{dgm}
Let $\G_n = \{0,1\}^{n \times n}$ be the whole space, and put
\begin{align}
\label{permgraph}
P_n(G) \propto \beta^{\# G} \per_\alpha(G),
\end{align}
for $G \in \G_n$, $\alpha,\beta > 0$, where $\# G = \sum_{i,j\leq n} G_{ij}$ is the number of edges in $G$. Then the normalization constant is
\begin{align*}
z_n(\alpha,\beta) \doteq \sum_{G \in \G_n} \beta^{\# G} \per_\alpha(G) = \alpha_{n\uparrow 1}\left( \frac{\beta}{1+\beta} \right)^n (1+\beta)^{n^2},
\end{align*}
where $\alpha_{n\uparrow 1} \doteq \alpha (\alpha+1)\cdots (\alpha+n-1)$ is the rising factorial starting at $\alpha$. The degree distribution is given by
\begin{align*}
\sum_{j=1}^n G_{1j} - 1 \sim \text{Binom}\left(n-1,\frac{\beta}{1+\beta}\right).
\end{align*}
\end{theorem}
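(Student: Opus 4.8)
The plan is to compute both quantities by expanding the $\alpha$-permanent, interchanging the order of summation so that the sum over permutations sits outside the sum over graphs, and then exploiting the fact that the inner graph-sum factorizes entry-by-entry. Writing $\beta^{\# G} = \prod_{i,j}\beta^{G_{ij}}$ and $\per_\alpha(G) = \sum_{\sigma\in\S_n}\alpha^{\#\sigma}\prod_i G_{i,\sigma(i)}$, I would first record
\[
z_n(\alpha,\beta) = \sum_{\sigma\in\S_n}\alpha^{\#\sigma}\sum_{G\in\G_n}\Big(\prod_{i,j}\beta^{G_{ij}}\Big)\prod_{i=1}^n G_{i,\sigma(i)}.
\]
The factor $\prod_i G_{i,\sigma(i)}$ vanishes unless the $n$ entries on the ``$\sigma$-diagonal'' $\{(i,\sigma(i))\}$ are all equal to $1$; holding those entries fixed at $1$ contributes $\beta^n$, while each of the remaining $n^2-n$ entries is free and contributes $\sum_{b\in\{0,1\}}\beta^b = 1+\beta$. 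Hence the inner sum equals $\beta^n(1+\beta)^{n^2-n}$ for every $\sigma$, crucially independent of $\sigma$.

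Since the inner sum is $\sigma$-free it pulls out of the outer sum, leaving $z_n = \beta^n(1+\beta)^{n^2-n}\sum_{\sigma\in\S_n}\alpha^{\#\sigma}$. The one classical input is the cycle-counting identity $\sum_{\sigma\in\S_n}\alpha^{\#\sigma} = \alpha_{n\uparrow 1}$, obtained by grouping permutations by their number of cycles and recognizing the unsigned Stirling numbers of the first kind as the coefficients of the rising factorial. Substituting gives $z_n = \alpha_{n\uparrow 1}\beta^n(1+\beta)^{n^2-n}$, and the one-line rearrangement $\beta^n(1+\beta)^{n^2-n} = \big(\tfrac{\beta}{1+\beta}\big)^n(1+\beta)^{n^2}$ matches the claimed form.

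For the degree distribution I would rerun the identical argument with the extra constraint $\sum_{j}G_{1j} = d$. For fixed $\sigma$ the entry $G_{1,\sigma(1)}$ is forced to $1$, so the remaining $n-1$ entries of row $1$ must contain exactly $d-1$ ones, contributing $\binom{n-1}{d-1}\beta^{d-1}$; the $n-1$ forced entries in the other rows contribute $\beta^{n-1}$, and the remaining $(n-1)^2$ entries are free, contributing $(1+\beta)^{(n-1)^2}$. The outcome is again independent of $\sigma$, so after factoring out $\sum_\sigma\alpha^{\#\sigma}=\alpha_{n\uparrow 1}$ and dividing by $z_n$, the factors $\alpha_{n\uparrow 1}$ and $\beta^n$ cancel and the exponent bookkeeping $(n-1)^2-(n^2-n)=-(n-1)$ yields
\[
\P\Big(\textstyle\sum_{j=1}^n G_{1j}=d\Big) = \binom{n-1}{d-1}\Big(\frac{\beta}{1+\beta}\Big)^{d-1}\Big(\frac{1}{1+\beta}\Big)^{n-d},
\]
which is exactly the law of $1+\text{Binom}\big(n-1,\tfrac{\beta}{1+\beta}\big)$.

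I do not anticipate a serious obstacle: the whole computation hinges on the single observation that conditioning the inner graph-sum on any permutation $\sigma$ (or on $\sigma$ together with a fixed row-sum) produces a value that does not depend on $\sigma$, thereby decoupling the permanent's cycle weighting from the edge counting. The only points requiring care are the exponent arithmetic and the off-by-one in the support -- the forced diagonal entry in row $1$ is precisely what shifts the binomial by $1$ -- and the lone external fact needed is the Stirling-number generating identity for $\sum_\sigma\alpha^{\#\sigma}$.
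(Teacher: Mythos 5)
Your proposal is correct and follows essentially the same route as the paper: interchange the sum over graphs and the sum over permutations, observe that the inner graph-sum (with or without the row-1 constraint) is independent of $\sigma$, and invoke $\sum_{\sigma\in\S_n}\alpha^{\#\sigma}=\alpha_{n\uparrow 1}$. The only cosmetic difference is that you evaluate the inner sum by factorizing entry-by-entry, whereas the paper groups graphs by edge count and applies the binomial formula --- these are the same computation written two ways.
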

A consequence of the above theorem is that the expected number of edges in this model grows as 
\begin{align}
\label{expedges}
\E\left[\sum_{i,j} G_{ij} \right]\sim \frac{\beta n^2}{1+\beta},
\end{align}
in the sense of $a_n \sim b_n \iff a_n/b_n \to 1$.

\subsection{Two Notions of Consistency}

Before stating the negative result, we present two notions of projection from $(n+1)$-graphs to $n$-graphs.

\begin{definition}
The subselection map $\varphi^{\text{ss}}_n : \{0,1\}^{(n+1)\times(n+1)} \to \{0,1\}^{n\times n}$ is defined by
\begin{align*}
\left(\varphi^{\text{ss}}_n (G)\right)_{ij} = G_{ij}, \hspace{2em} i,j \in [n].
\end{align*}
\end{definition}

\begin{definition}
The delete-and-repair map $\varphi^{\text{dr}}_n : \{0,1\}^{(n+1)\times (n+1)} \to \{0,1\}^{n\times n}$ is defined by
\begin{align}
\label{dr}
\left(\varphi^{\text{dr}}_n (G)\right)_{ij} = G_{ij} \vee (G_{in} \wedge G_{nj}), \hspace{2em} i,j \in [n],
\end{align}
where $\vee$ and $\wedge$ represent boolean ``or'' and ``and'' respectively.
\end{definition}
Note that the definition of the delete-and-repair projection mapping is specific to matrices with boolean valued entries, whereas the definition of the subselection projection mapping applies equally well to matrices whose entries are real valued. In words, given a graph on $n+1$ vertices, the delete-and-repair projection (\ref{dr}) deletes all edges connecting to node $n+1$, and repairs edges for pairs of nodes (including self pairs, $(v,v)$) between which there was a length 2 path going through node $n+1$. These notions of projection are illustrated in Figure \ref{projexamples}.

\begin{figure}[h]
\centering
\includegraphics[width=80mm,height=35mm]{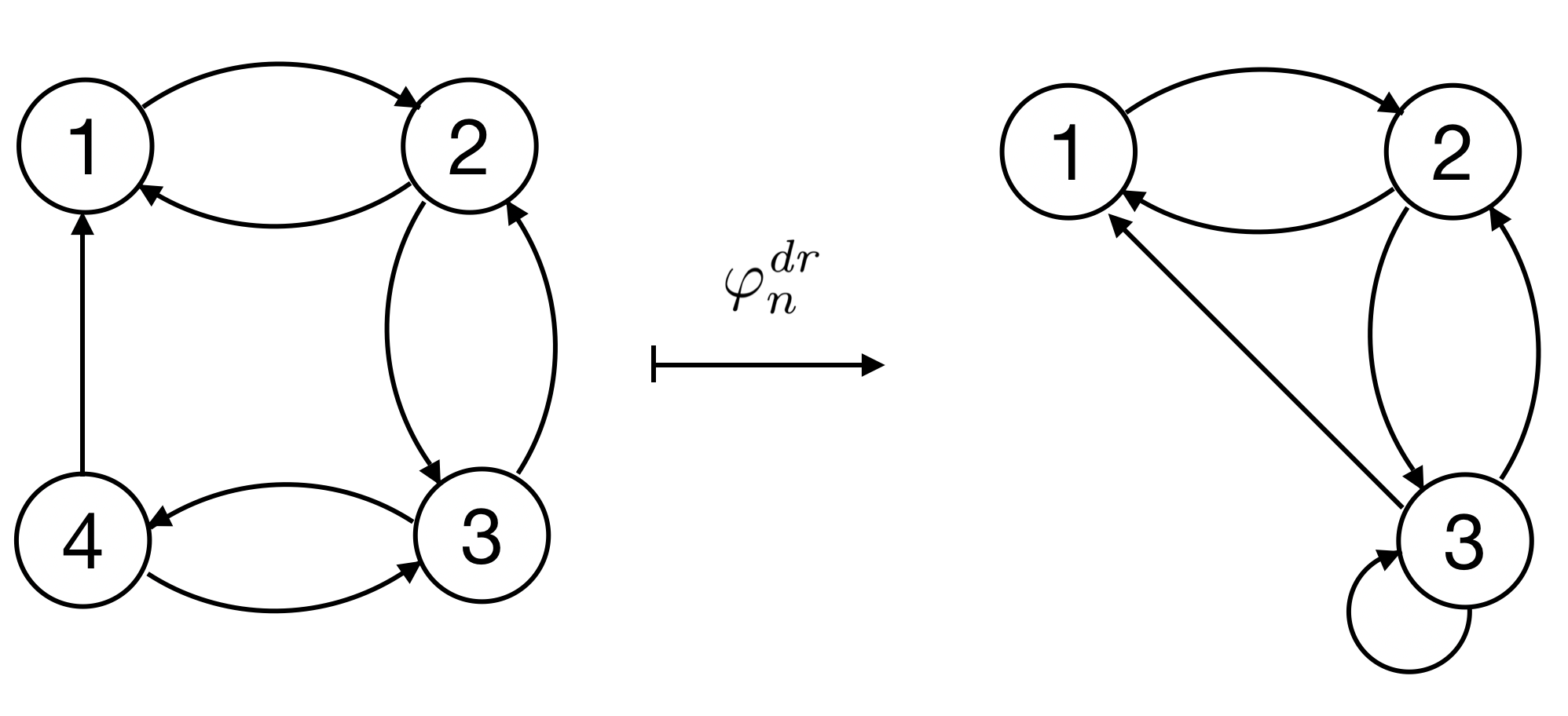}
\includegraphics[width=80mm,height=35mm]{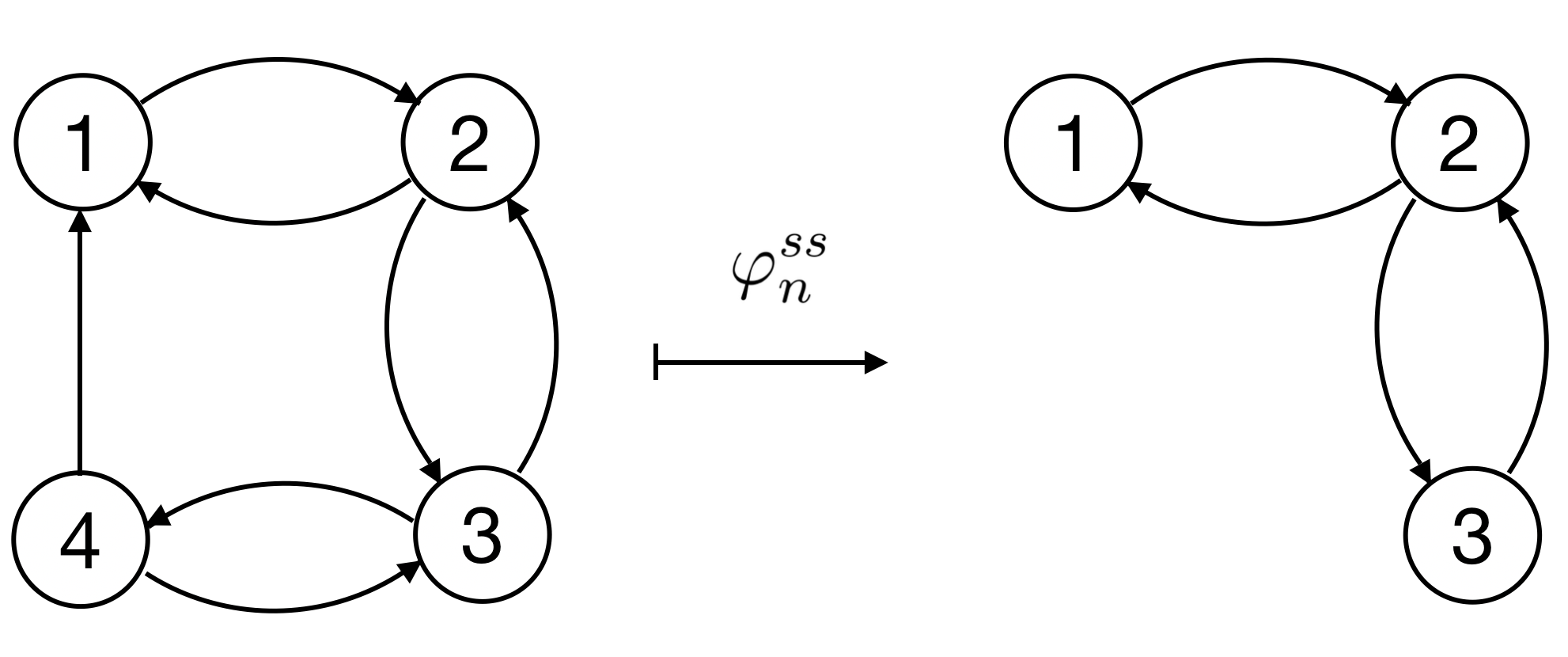}
\caption{Pictured above (top) is an example of a directed graph on vertex set $[4]$ projected down to a directed graph on $[3]$ according to the delete-and-repair operation (\ref{dr}), and (bottom) the same graph projected down according to subselection.}
\label{projexamples}
\end{figure}

The CRP($\alpha$) for partitions is recovered from (\ref{per}) by setting $\G_n$ equal to the set of partition matrices, while the CRP$(\alpha$) for permutations is recovered by setting $\G_n$ equal to the set of permutation matrices. It is straightforward to check that CRP$(\alpha$) on partitions is consistent with respect to both subselection and delete-and-repair, which in this case are equivalent due to the transitivity property of equivalence relations. However, when viewed as a distribution on permutations, the CRP$(\alpha)$ is consistent only with respect to delete-and-repair. 

\begin{corollary}
\label{crpperm}
Put $\G_n = \{\Pi_\sigma \in \{0,1\}^{n \times n} : \sigma \in \S_n\}$, where $(\Pi_\sigma)_{ij}=1 \iff \sigma(i)=j$, and $\beta=1$ in (\ref{permgraph}). Then the following probabilities,
\begin{align}
\label{ewens}
P_n(\sigma) = \frac{\alpha^{\# \sigma}}{\alpha_{n\uparrow 1}},\hspace{2em}\text{$\sigma \in \S_n$},
\end{align}
define a valid probability distribution on $\S_n$ that is delete-and-repair consistent. Here, $\alpha_{n\uparrow 1} \doteq \alpha(\alpha+1)\cdots(\alpha+n-1)$ is the rising factorial starting at $\alpha$. The above distribution is known as the $\text{CRP}(\alpha)$ for permutations.
\end{corollary}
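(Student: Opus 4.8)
The plan is to first reduce the asserted formula to a normalization computation, and then establish delete-and-repair consistency by a direct pushforward calculation over the fibers of $\varphi^{\text{dr}}_n$.

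First I would evaluate the $\alpha$-permanent on a permutation matrix. Since $(\Pi_\sigma)_{i,\tau(i)} = 1$ for every $i$ precisely when $\tau = \sigma$, the product $\prod_{i} (\Pi_\sigma)_{i,\tau(i)}$ vanishes unless $\tau=\sigma$, so $\per_\alpha(\Pi_\sigma) = \alpha^{\#\sigma}$. Every permutation matrix has exactly $n$ ones, so with $\beta=1$ the factor $\beta^{\#G}=1^n$ is constant and drops out, giving $P_n(\sigma) \propto \alpha^{\#\sigma}$. To pin down the constant I would invoke the classical identity $\sum_{\sigma \in \S_n} \alpha^{\#\sigma} = \alpha_{n\uparrow 1}$, which holds because the number of $\sigma\in\S_n$ with exactly $k$ cycles is the unsigned Stirling number of the first kind $c(n,k)$ and $\sum_k c(n,k)\alpha^k = \alpha_{n\uparrow 1}$. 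Equivalently, one proves it by the recursion $\sum_{\sigma\in\S_{n+1}}\alpha^{\#\sigma} = (\alpha+n)\sum_{\sigma\in\S_n}\alpha^{\#\sigma}$, obtained by tracking whether $n+1$ is inserted as a new fixed point (multiplying by $\alpha$) or spliced after one of the $n$ existing elements (multiplying by $1$). This yields $P_n(\sigma) = \alpha^{\#\sigma}/\alpha_{n\uparrow 1}$ together with validity.

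Next I would determine the action of $\varphi^{\text{dr}}_n$ on a permutation matrix $\Pi_\sigma$ with $\sigma\in\S_{n+1}$, deleting node $n+1$. If $\sigma(n+1)=n+1$, then $n+1$ is a singleton cycle, no length-$2$ path through $n+1$ stays inside $[n]$, and the repair does nothing, so $\varphi^{\text{dr}}_n(\Pi_\sigma)=\Pi_{\sigma'}$ where $\sigma'=\sigma|_{[n]}\in\S_n$ and $\#\sigma'=\#\sigma-1$. Otherwise $n+1$ lies in a cycle of length at least two; writing $a=\sigma^{-1}(n+1)$ and $b=\sigma(n+1)$ with $a,b\in[n]$, the deletion removes the edges $a\to n+1$ and $n+1\to b$, while the repair reinstates exactly the length-$2$ path $a\to b$. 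Thus $\varphi^{\text{dr}}_n(\Pi_\sigma)=\Pi_{\sigma'}$ with $\sigma'(a)=b$ and $\sigma'(i)=\sigma(i)$ otherwise; a short check that every row and column retains a unique one confirms $\sigma'\in\S_n$, and $\#\sigma'=\#\sigma$ because only one cycle is shortened. The key combinatorial point is then to enumerate the fiber of a fixed $\sigma'\in\S_n$: it contains exactly $n+1$ permutations, namely the one re-inserting $n+1$ as a fixed point (with $\#\sigma=\#\sigma'+1$) and the $n$ obtained by splicing $n+1$ immediately after some $a\in[n]$, i.e.\ $\sigma(a)=n+1,\ \sigma(n+1)=\sigma'(a)$ (each with $\#\sigma=\#\sigma'$).

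Finally I would sum $P_{n+1}$ over this fiber and simplify using the rising-factorial recursion $\alpha_{(n+1)\uparrow 1}=(\alpha+n)\,\alpha_{n\uparrow 1}$:
\begin{align*}
\sum_{\sigma \in (\varphi^{\text{dr}}_n)^{-1}(\sigma')} P_{n+1}(\sigma)
= \frac{\alpha\cdot\alpha^{\#\sigma'} + n\cdot\alpha^{\#\sigma'}}{\alpha_{(n+1)\uparrow 1}}
= \frac{(\alpha+n)\,\alpha^{\#\sigma'}}{(\alpha+n)\,\alpha_{n\uparrow 1}}
= \frac{\alpha^{\#\sigma'}}{\alpha_{n\uparrow 1}} = P_n(\sigma'),
\end{align*}
which is exactly delete-and-repair consistency. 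The main obstacle is the middle step: verifying that $\varphi^{\text{dr}}_n$ always returns a genuine permutation matrix and enumerating its fibers with the correct cycle-count bookkeeping, since everything downstream is the routine Stirling/rising-factorial algebra. I expect the cleanest phrasing to identify this deletion-and-bypass operation with removing the last customer in the sequential Chinese restaurant construction, so that the $\alpha:1:\cdots:1$ split of probability mass over the $n+1$ preimages is transparent.
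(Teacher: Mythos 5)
Your proof is correct and follows essentially the same route as the paper's: validity via the rising-factorial recursion $\sum_{\sigma\in\S_n}\alpha^{\#\sigma}=\alpha_{n\uparrow 1}$, and consistency by enumerating the $n+1$ preimages of a fixed $\sigma'\in\S_n$ under $\varphi^{\text{dr}}_n$ (one fixed-point insertion contributing $\alpha^{\#\sigma'+1}$, and $n$ splicings each contributing $\alpha^{\#\sigma'}$) and cancelling the factor $\alpha+n$. The only difference is that you spell out details the paper leaves implicit, namely that $\per_\alpha(\Pi_\sigma)=\alpha^{\#\sigma}$ and that $\varphi^{\text{dr}}_n$ maps permutation matrices to permutation matrices with the stated cycle bookkeeping.
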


From Corollary \ref{crpperm}, it follows that sparsity and node-exchangeability are not mutually exclusive properties. Indeed, the Ewens permutations described by (\ref{ewens}) are delete-and-repair consistent, node-exchangeable, and sparse, as they contain exactly $n = o(n^2)$ edges for each $n$.

To see why the CRP($\alpha$) for permutations is not subselection consistent, consider the permutation $(123)$. It has adjacency matrix satisfying,
\begin{align*}
\begin{bmatrix}
0&1&0\\0&0&1\\1&0&0
\end{bmatrix} \stackrel{\varphi^{\text{ss}}_n}{\longmapsto} \begin{bmatrix}
0&1\\0&0
\end{bmatrix}
\end{align*}
under subselection of the first two vertices. A permutation matrix must have a single ``1'' in each row and column, so the adjacency matrix on the right does not correspond to a permutation. Thus, these distributions cannot be consistent in the sense of subselection. However, the distribution can be specified by a generative a process (CRP seating plan), meaning that the law of total probability is satisfied; the distributions are consistent in some sense. Indeed, a more natural notion of projection in this example is delete-and-repair, for which we would instead obtain,
\begin{align*}
\begin{bmatrix}
0&1&0\\0&0&1\\1&0&0
\end{bmatrix} \stackrel{\varphi^{\text{dr}}_n}{\longmapsto} \begin{bmatrix}
0&1\\1&0
\end{bmatrix},
\end{align*}
according to the formula (\ref{dr}). For a permutation $\sigma \in \S_{n+1}$, the delete-and-repair operation deletes node $n+1$ from its cycle, while repairing an edge from the preimage of $n+1$ to the image of $n+1$ under $\sigma$. If $n+1$ is contained in its own cycle, then the node and the cycle are entirely removed, and no edges are repaired.

\subsection{A Negative Result}

As a graph is projected down to a smaller graph according to the boolean operation (\ref{dr}), edges may be repaired and thus added to the graph. It follows from this observation and (\ref{expedges}) that, for the distributions (\ref{permgraph}) to be delete and repair consistent, it is natural to expect the $\beta$ parameter to be decreasing in $n$. One may then suspect that (\ref{expedges}) is $o(n^2)$ for a delete and repair consistent sequence $(P_n)$. But, the next result states that the PGM$(\alpha_n,\beta_n)$ defined in (\ref{permgraph}), which has unrestricted support, i.e. $\G_n = \{0,1\}^{n\times n}$, does not admit a consistent sequence $(P_n)$ in either sense described above, for any sequence of pairs $(\alpha_n,\beta_n)$.

\begin{proposition}
\label{imposs}
For no sequence of pairs $(\alpha_n,\beta_n)_{n\in \N} > 0$ are the distributions
\begin{align*}
P_n(G) \propto \beta_n^{\# G} \per_{\alpha_n} (G), \hspace{2em} G \in \G_n \doteq \{0,1\}^{n\times n}
\end{align*}
delete-and-repair or subselection consistent. Equivalently, the statement
\begin{align}
\label{ltp}
P_n(G) = \sum_{G' \in \G_{n+1}: \varphi^\bullet_n(G') = G}  P_{n+1}(G'),\hspace{2em} G \in \G_n,n \in \N
\end{align}
is not true, where $\bullet$ is either dr (delete-and-repair) or ss (subselection).

\end{proposition}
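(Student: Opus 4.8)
The plan is to disprove the law of total probability (\ref{ltp}) for each projection type $\bullet\in\{\text{ss},\text{dr}\}$ by exhibiting a single pair $(n,G)$ at which it fails; since consistency demands (\ref{ltp}) for all $n$ and all $G\in\G_n$, one violated instance suffices. The organizing observation is that $\per_\alpha(G)>0$ exactly when the bipartite graph of $G$ admits a perfect matching, so the support of $P_n$ is the set of graphs having a perfect matching; in particular any $G$ with an all-zero row or column has $P_n(G)=0$. The two cases then diverge sharply: subselection does not preserve this support (deleting the last row and column can create an all-zero row), whereas delete-and-repair does (a perfect matching of $G'$ on $[n+1]$ induces one of $\varphi^{\text{dr}}_n(G')$ on $[n]$ by short-cutting the two-step path through the deleted node). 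This dichotomy explains why the first case is immediate and the second requires a quantitative argument.

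For subselection I would argue directly from the support. Take $\sigma=(1\ n{+}1)\in\S_{n+1}$, the transposition fixing $2,\dots,n$, and let $G'=\Pi_\sigma$; then $P_{n+1}(G')\propto \beta_{n+1}^{\#G'}\alpha_{n+1}^{\#\sigma}>0$. Its top-left $n\times n$ block $G=\varphi^{\text{ss}}_n(G')$ has an all-zero first row (the only entry of row $1$ sits in column $n+1$), so $P_n(G)=0$; yet the subselection fiber of $G$ contains $G'$, forcing the right-hand side of (\ref{ltp}) to be strictly positive. Hence (\ref{ltp}) fails for every choice of parameters, already at the transition $1\to 2$ where $P_1$ is degenerate at the single self-loop.

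For delete-and-repair the support obstruction is unavailable, so I would turn to explicit marginal probabilities. Evaluating the weighted permanent by forcing the relevant entries to be $1$ and summing $(1+\beta)$ over each free entry, together with the identity $\sum_{\sigma\in\S_n}\alpha^{\#\sigma}=\alpha_{n\uparrow1}$ already used for $z_n$ in Theorem \ref{dgm}, gives closed forms for the single-entry marginals,
\[
P_n(G_{11}=1)=\frac{\alpha(1+\beta)+\beta(n-1)}{(1+\beta)(\alpha+n-1)},\qquad P_n(G_{12}=1)=\frac{\beta}{1+\beta}+\frac{1}{(1+\beta)(\alpha+n-1)},
\]
with $(\alpha,\beta)=(\alpha_n,\beta_n)$. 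Applying (\ref{ltp}) to the single-entry events $\{G_{11}=1\}$ and $\{G_{12}=1\}$ and expanding the repair term by inclusion--exclusion over the length-two path through node $n+1$ yields recursions $P_n(G_{11}=1)=P_{n+1}(G_{11}=1)+\rho_{n+1}$ and $P_n(G_{12}=1)=P_{n+1}(G_{12}=1)+\tau_{n+1}$, where $\rho_{n+1},\tau_{n+1}\ge 0$ are themselves closed-form repair probabilities; summing over all entries recovers the edge-count identity with $\E_{P_n}[\#G]=n+n(n-1)\beta_n/(1+\beta_n)$ from (\ref{expedges}). The edge-count identity alone forces a sparse scaling $\beta_n=\Theta(1/n)$ (otherwise the $\Theta(n^2)$ expected repair mass cannot balance the $\Theta(n)$ boundary contribution of node $n+1$), matching the heuristic that $\beta_n$ must decrease in $n$. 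Plugging this forced scaling into the two single-entry recursions over-determines $(\alpha_n,\beta_n)$, and I would finish by showing the two cannot hold simultaneously.

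The main obstacle is this last step in the delete-and-repair case. Because delete-and-repair preserves support, no single qualitative feature is violated; the contradiction must come from matching exact constants. Concretely, the labor lies in (i) computing the repair probabilities $\rho_{n+1},\tau_{n+1}$, namely the joint probabilities of the pattern $\{G'_{i,n+1}=1,\ G'_{n+1,j}=1\}$ with the direct edge present or absent, by the same force-and-resum technique but now with overlapping forced positions, requiring a case analysis over whether $\sigma(1)=n{+}1$, $\sigma(n{+}1)=j$, and so on; and (ii) carrying out the asymptotics of the resulting rational recursions in the pinned regime $\beta_n=\Theta(1/n)$ to exhibit the incompatibility. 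The bookkeeping of delete-and-repair fibers---an edge in the projection arising from a direct edge \emph{or} a repaired path, so that fibers are overlapping unions demanding inclusion--exclusion---is the delicate part throughout.
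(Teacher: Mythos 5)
Your subselection argument is correct, complete, and genuinely different from the paper's. The paper proves ss-inconsistency quantitatively: it rearranges (\ref{ltp}) into (\ref{newltp}), computes the fiber sum in closed form, deduces that $G \mapsto (\beta_n/\beta_{n+1})^{\# G}\sum_\sigma \alpha_n^{\#\sigma}\textbf{1}_{\{\sigma\subset G\}}$ must be constant in $G$, forces $\alpha_n\equiv 1$ and $\beta_n\equiv\beta$, and then contradicts the constancy. Your support obstruction is much shorter: $\Pi_{(1\,n+1)}$ carries positive $P_{n+1}$-mass, lies in the ss-fiber of its top-left block $G$, yet $G$ has an all-zero first row, so $\per_{\alpha_n}(G)=0$ and $P_n(G)=0$, violating (\ref{ltp}) in one line. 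This is valid (for $\alpha>0$ the support of $P_n$ is exactly the graphs containing a permutation), and the two closed-form marginals you state, $P_n(G_{11}=1)$ and $P_n(G_{12}=1)$, are also correct and derivable by the force-and-resum technique in the proof of Theorem \ref{dgm}.

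The delete-and-repair half, however, has a genuine gap: the contradiction is never exhibited, and the route you propose is not guaranteed to produce one. Per step, your plan extracts from consistency two scalar equations --- the recursions for $P(G_{11}=1)$ and $P(G_{12}=1)$ --- relating the two unknowns $(\alpha_n,\beta_n)$ to $(\alpha_{n+1},\beta_{n+1})$. A square system of necessary conditions like this can generically be solved (e.g.\ backwards in $n$), so infeasibility is not automatic; you would have to prove that the required pair of marginal values falls outside the range of the marginal map over positive parameters, and nothing in the proposal does this. The edge-count identity gives no independent help, since, as you note, it is the sum of the entrywise recursions; moreover the claim that it ``alone forces $\beta_n=\Theta(1/n)$'' is asserted rather than proved, and making it rigorous already requires two-sided control of the expected repair mass, i.e.\ the joint laws of triples of entries that you defer. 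The paper avoids all of this by using the full pointwise identity rather than low-order marginals: from (\ref{norm}), the ratio $z_{n+1}/z_n$ is independent of $G$, so the $G$-dependent right-hand side must be constant in $G$; it then takes the two $4$-graphs $G_1=(1234)\vee(12)(34)\vee(23)$ and $G_2=(1234)\vee(123)(4)\vee(23)$, which have equal edge count and equal permanent $\alpha_4+\alpha_4^2$, enumerates their dr-fibers explicitly ($139$ versus $163$ relevant $5$-graphs), and finds that the difference of the two right-hand sides is a polynomial in $(\alpha_5,\beta_5)$ with strictly positive coefficients, hence nonzero for any positive parameters --- a finite, explicit computation. If you want to complete your version, the statistic you compare across graphs must retain this full pointwise strength (e.g.\ constancy in $G$ of the fiber-weight ratio), not two single-entry marginals.
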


A sketch of the proof of Proposition \ref{imposs} is provided below, while the full proof is presented in Section \ref{calculations}.

\begin{proof}[Proof sketch.]
When $\bullet$ is dr, the equation in (\ref{ltp}) can be written
\begin{align}
\label{norm}
\frac{z_{n+1}(\alpha_{n+1},\beta_{n+1})}{z_n(\alpha_n,\beta_n)} = \frac{\sum_{\sigma'\in \S_{n+1}}\alpha_{n+1}^{\sigma'} \sum_{G' : \varphi^\text{dr}_n(G')=G}\beta_{n+1}^{\# G'}\textbf{1}_{\{\sigma'\subset G'\}}}{\beta_n^{\# G} \sum_{\sigma \in \S_n} \alpha_n^{\# \sigma} \textbf{1}_{\{\sigma \subset G\}}}.
\end{align}
One consequence is that the right hand side is constant in $G \in \{0,1\}^{4 \times 4}$. The following two graphs,
\begin{align*}
G_1 = \begin{bmatrix}
0&1&0&0\\1&0&1&0\\0&1&0&1\\1&0&1&0
\end{bmatrix} ,\hspace{1em} G_2 = \begin{bmatrix}
0&1&0&0\\0&0&1&0\\1&1&0&1\\1&0&0&1
\end{bmatrix}
\end{align*}
have the same number of edges. $G_1$ contains the permutations $(1234)$ and $(12)(34)$. $G_2$ contains the permutations $(1234)$ and $(123)(4)$. These graphs are visualized in Figure \ref{g1g2}. Hence the denominators in (\ref{norm}) are equal,
\begin{align*}
\beta_4^{\# G_1} \sum_{\sigma \in \S_4} \alpha_4^{\# \sigma}  \textbf{1}_{\{\sigma \subset G_1\}} = \beta_4^{\# G_2} \sum_{\sigma \in \S_4} \alpha_4^{\# \sigma}  \textbf{1}_{\{\sigma \subset G_2\}}.
\end{align*}
The key observation used to exhibit a contradiction is that the numerator of the right hand side of (\ref{norm}) depends on the set of graphs which project down to $G$ according to the delete and repair operation (\ref{dr}). It is shown in Section \ref{calculations} that the set of graphs in $\G_5$ which project down to $G_2$ has greater cardinality than the corresponding set of graphs for $G_1$. Upon computing the right hand side of (\ref{norm}) for $G_1$ and $G_2$, it becomes clear that they are not equal for any pair $(\alpha_4,\beta_4),(\alpha_5,\beta_5) > 0$, hence contradicting the statement (\ref{ltp}). This computation, along with the proof for subselection inconsistency, can be found in Section \ref{calculations}.

\end{proof}

\begin{figure}[h]
\centering
\includegraphics[width=80mm,height=40mm]{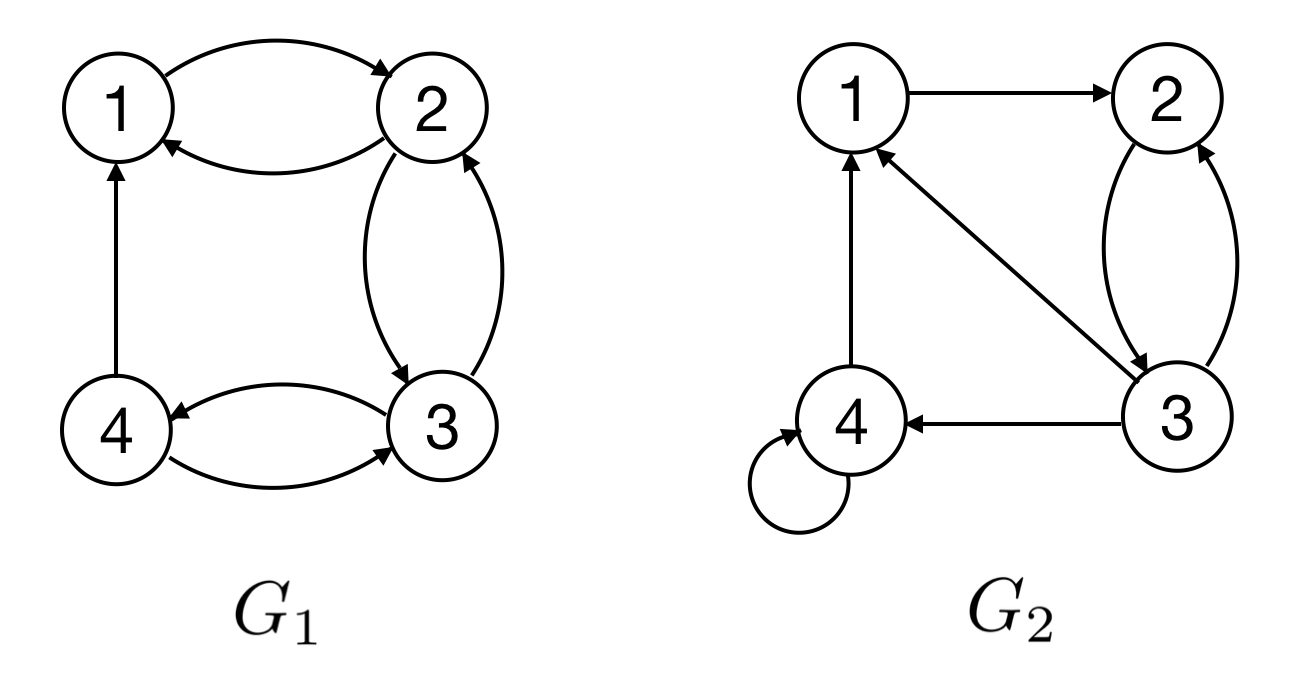}
\caption{Graphs $G_1 = (1234) \vee(12)(34) \vee(23)$ and $G_2 = (1234)\vee (123)(4)\vee(23)$ for which the right hand sides of (\ref{norm}) are not the equal.}
\label{g1g2}
\end{figure}

Note that the above proposition implies that no constant (in $n$) pair $(\alpha,\beta)$ allows for a consistent sequence of distributions $(P_n)$ in either sense discussed above.

\section{Conclusion}

We have investigated a collection of exchangeable distributions on graphs, defined via the $\alpha$ permanent. Setting $\G_n$ to be the set of all directed graphs allows for tractable calculations regarding the normalizing constant and degree distribution. A negative result was obtained; no choice of the parameters $(\alpha_n,\beta_n)$ yields a consistent collection of graph distributions in the sense of subselection or delete-and-repair. The following questions remain unaddressed: Can we distinguish between the permanental graph model and the Erd\"{o}s-R\'{e}nyi$(n,\frac{\beta}{1+\beta})$ graph as $n \to \infty$? Can we find a different delete-and-repair projective system $(\G_n)_{n \in \N}$ (meaning $\varphi^\text{dr}_n(\G_{n+1}) \subset \G_n$) for which similar calculations are tractable? It would be surprising if the only delete-and-repair consistent graphs were exchangeable partitions and permutations.

\newpage

\section{Proofs}
\label{calculations}

\begin{proof}[Proof of Theorem \ref{dgm}]
The sum over all graphs $G \in \G_n$ is
\begin{align*}
z_n(\alpha,\beta) &\doteq \sum_{G \in \G_n} \beta^{\# G}\sum_{\sigma} \alpha^{\# \sigma} \prod_{i=1}^n G_{i,\sigma(i)} \\
&= \sum_{\sigma} \alpha^{\# \sigma} \sum_{G \in \G_n} \beta^{\# G} \textbf{1}_{\{\sigma \subset G \}} \\
&= \underbrace{\sum_{\sigma} \alpha^{\# \sigma}}_{\alpha_{n\uparrow 1}} \sum_{m=n}^{n^2} \beta^m \underbrace{\sum_{G \in \G_n : \# G = m} \textbf{1}_{\{\sigma \subset G\}}}_{=\binom{n^2-n}{m-n}}\\
&= \alpha_{n\uparrow 1} \beta^n \sum_{m=0}^{n^2-n} \beta^{m} \binom{n^2-n}{m} \\
&= \alpha_{n\uparrow 1} \beta^n (1+\beta)^{n^2-n},
\end{align*}
where the first underbrace in the third line is due to Lemma \ref{crpperm}, and last equality is by the binomial formula. Hence the normalization constant is
\begin{align*}
z_n(\alpha,\beta) = \alpha_{n\uparrow 1} \left(\frac{\beta}{1+\beta}\right)^n (1+\beta)^{n^2}.
\end{align*}
Put $G_{1\bullet} \doteq \sum_{j=1}^n G_{1j}$, whose distribution is computed below. 
\begin{align*}
\P(G_{1\bullet} = k+1) &= \sum_{G \in \G_n: G_{1\bullet}=k+1} P_n(G) \\
&= \frac{1}{z_n(\alpha,\beta)} \sum_{G \in \G_n: G_{1\bullet}=k+1} \beta^{\# G}\sum_{\sigma}\alpha^{\# \sigma} \textbf{1}_{\{\sigma \subset G\}} \\
&= \frac{1}{z_n(\alpha,\beta)} \sum_{\sigma}\alpha^{\# \sigma}  \sum_{G \in \G_n: G_{1\bullet}=k+1} \beta^{\# G} \textbf{1}_{\{\sigma \subset G\}} \\
&= \frac{1}{z_n(\alpha,\beta)} \sum_{\sigma}\alpha^{\# \sigma} \sum_{m=n+k}^{n^2-(n-(k+1))} \beta^m  \sum_{G \in \G_n: G_{1\bullet}=k+1, \# G=m}  \textbf{1}_{\{\sigma \subset G\}}.
\end{align*}
In the last equality, we have partitioned the terms in the sum over all $G_{1\bullet} = k+1$ into groups of graphs with $m$ edges. Clearly any graph $G$ in the event $\{G_{1\bullet}=k+1\}$, for which $\sigma\subset G$, has at least $n+k$ edges ($n$ for the permutation, $k$ for the additional edges in the first row), and at most $n^2-(n-(k+1))$ edges (there must be exactly $n-(k+1)$ zeros in the first row). Continuing on, the above is equal to
\begin{align*}
&= \frac{1}{z_n(\alpha,\beta)} \sum_{\sigma}\alpha^{\# \sigma} \sum_{m=n+k}^{n^2-(n-(k+1))} \beta^m  \cdot \binom{n-1}{k} \cdot \binom{n^2-n-(n-1)}{m-(n+k)}.
\end{align*}
The combinatorial factor counts the number of ways to pick the additional $k$ edges in the first row, and the number of ways to pick the other $m-(n+k)$ edges from the $n^2-n-(n-1)$ possibilities, since $n$ entries are fixed due to the permutation $\sigma$, and the remaining $n-1$ entries of the first row are fixed by the (exactly) $k$ ones in the first row. The product of these two represents the number of graphs $G$ with $m$ total edges with $k+1$ in the first row, such that $\sigma \subset G$.

Factoring out $\beta^{n+k}$ and shifting the summation index, the above becomes
\begin{align*}
&= \frac{1}{z_n(\alpha,\beta)} \sum_{\sigma} \alpha^{\# \sigma} \beta^{n+k} \binom{n-1}{k} \sum_{m=0}^{n^2-(n-(k+1)) - (n+k)} \beta^m \binom{n^2-n-(n-1)}{m} \\
&= \frac{1}{\alpha_{n\uparrow 1} \left(\frac{\beta}{1+\beta} \right)^n (1+\beta)^{n^2}}  \alpha_{n\uparrow 1} \beta^{n+k} \binom{n-1}{k} \underbrace{\sum_{m=0}^{n^2-n-(n-1)} \beta^m \binom{n^2-n-(n-1)}{m}}_{=(1+\beta)^{n^2-n-(n-1)} \text{ by the binomial formula}} \\
&= \binom{n-1}{k} \beta^k (1+\beta)^{n-n^2} (1+\beta)^{n^2-n-(n-1)} \\
&= \binom{n-1}{k} \beta^k (1+\beta)^{-(n-1)}.
\end{align*}
The above is the pmf of a Binom$(n-1,\frac{\beta}{1+\beta})$ variable at $k$.

\end{proof}

\begin{proof}[Proof of Lemma \ref{crpperm}.] The validity of the probabilities
\begin{align*}
P_n(\sigma) = \frac{\alpha^{\#\sigma}}{\alpha_{n\uparrow 1}} \hspace{1em} \text{for }\sigma\in \S_n
\end{align*}
follows from the recursion
\begin{align*}
\sum_{\sigma \in \S_n} \alpha^{\# \sigma} &= \sum_{\sigma \in \S_{n-1}} \left((n-1)\alpha^{\# \sigma} + \alpha \cdot \alpha^{\#\sigma}\right) \\
&= (\alpha+n-1) \sum_{\sigma \in \S_{n-1}} \alpha^{\# \sigma} .
\end{align*}
Letting $\varphi^\text{dr}_n: \{0,1\}^{(n+1)\times(n+1)} \to \{0,1\}^{n\times n}$ denote the delete-and-repair mapping defined by (\ref{dr}), consistency amounts to showing
\begin{align*}
P_n(\sigma) = \sum_{\sigma' \in \S_{n+1}: \varphi^\text{dr}_n(\sigma') = \sigma} P_{n+1}(\sigma').
\end{align*}
The right hand side is a sum over the $n+1$ permutations $\sigma' \in \S_{n+1}$ that delete and repair down to $\sigma$. Exactly one of these permutations has $\# \sigma' = \#\sigma + 1$, namely the permutation obtained by placing $n+1$ in its own cycle. The other $\sigma'$ have the same number of cycles as $\sigma$.
\begin{align*}
\sum_{\sigma' \in \S_{n+1}: \varphi^\text{dr}_n(\sigma') = \sigma} P_{n+1}(\sigma') &= \frac{1}{\alpha_{(n+1)\uparrow 1}} \cdot \left(n \alpha^{\# \sigma} + \alpha^{\# \sigma + 1} \right) \\
&= \alpha^{\# \sigma} \cdot \frac{\alpha+n}{\alpha_{(n+1)\uparrow 1}} \\
&= \frac{\alpha^{\# \sigma}}{\alpha_{n\uparrow 1}},
\end{align*}
as desired. A similar calculation yields delete and repair consistency for the partitions generated by the CRP$(\alpha)$.

\end{proof}

\begin{proof}[Proof of Proposition \ref{imposs}]
It must be the case that $\alpha_n,\beta_n > 0$ in order for the probabilities defined by 
\begin{align*}
P_n(G) \propto \beta_n^{\# G} \per_{\alpha_n} (G)
\end{align*}
to be valid for all $G \in \G_n=\{0,1\}^{n\times n}$. Keeping this in mind, we compute the right hand side of (\ref{norm}) for $G_1$ and $G_2$.

The set of $G'\in\G_5$ for which $\varphi^\text{dr}_4(G')=G_1$ (and also contain at least one permutation) is
\begin{align*}
(\varphi^\text{dr}_4)^{-1}(G_1) = 
\begin{split} 
\left\{ \begin{bmatrix}
0&1&0&0&0\\1&0&1&0&0\\0&1&0&1&0\\ 1&0&1&0&0\\ *&*&*&*&1 
\end{bmatrix}, 
\begin{bmatrix}
0&1&0&0&*\\1&0&1&0&*\\0&1&0&1&*\\ 1&0&1&0&*\\ 0&0&0&0&1 
\end{bmatrix}, 
\begin{bmatrix}
0&*&0&0&1\\1&0&1&0&0\\0&1&0&1&0\\ 1&0&1&0&0\\ 0&1&0&0&1 
\end{bmatrix}, 
\begin{bmatrix}
0&*&0&0&1\\1&0&1&0&0\\0&*&0&1&1\\ 1&0&1&0&0\\ 0&1&0&0&1 
\end{bmatrix}, 
\right. \\
\left. 
\begin{bmatrix}
0&1&0&0&0\\ *&0&1&0&1\\0&1&0&1&0\\ 1&0&1&0&0\\ 1&0&0&0&1 
\end{bmatrix}, 
\begin{bmatrix}
0&1&0&0&0\\ *&0&*&0&1\\0&1&0&1&0\\ 1&0&1&0&0\\ 1&0&1&0&1 
\end{bmatrix}, 
\begin{bmatrix}
0&1&0&0&0\\ *&0&1&0&1\\0&1&0&1&0\\ *&0&1&0&1\\ 1&0&0&0&1 
\end{bmatrix}, 
\begin{bmatrix}
0&1&0&0&0\\ 1&0&1&0&0\\0&1&0&1&0\\ *&0&*&0&1\\ 1&0&1&0&1 
\end{bmatrix}, 
\right. \\
\left. 
\begin{bmatrix}
0&1&0&0&0\\ 1&0&*&0&1\\0&1&0&1&0\\ 1&0&*&0&1\\ 0&0&1&0&1 
\end{bmatrix}, 
\begin{bmatrix}
0&1&0&0&0\\ *&0&*&0&1\\0&1&0&1&0\\ *&0&*&0&1\\ 1&0&1&0&1 
\end{bmatrix},
\begin{bmatrix}
0&1&0&0&0\\ 1&0&*&0&1\\0&1&0&1&0\\ 1&0&1&0&0\\ 0&0&1&0&1 
\end{bmatrix}, 
\begin{bmatrix}
0&1&0&0&0\\ 1&0&1&0&0\\0&*&0&1&1\\ 1&0&1&0&0\\ 0&1&0&0&1 
\end{bmatrix}, 
\right. \\
\left. 
\begin{bmatrix}
0&1&0&0&0\\ 1&0&1&0&0\\0&*&0&*&1\\ 1&0&1&0&0\\ 0&1&0&1&1 
\end{bmatrix}, 
\begin{bmatrix}
0&1&0&0&0\\ 1&0&1&0&0\\0&1&0&*&1\\ 1&0&1&0&0\\ 0&0&0&1&1 
\end{bmatrix}, 
\begin{bmatrix}
0&1&0&0&0\\ 1&0&1&0&0\\0&1&0&1&0\\ *&0&1&0&1\\ 1&0&0&0&1 
\end{bmatrix},
\begin{bmatrix}
0&1&0&0&0\\ 1&0&1&0&0\\0&1&0&1&0\\ 1&0&*&0&1\\ 0&0&1&0&1 
\end{bmatrix}, 
\right. \\
\left. 
\begin{bmatrix}
0&*&0&0&1\\ 1&0&1&0&0\\0&1&0&1&0\\ 1&0&1&0&0\\ 0&1&0&0&0 
\end{bmatrix}, 
\begin{bmatrix}
0&*&0&0&1\\ 1&0&1&0&0\\0&*&0&1&1\\ 1&0&1&0&0\\ 0&1&0&0&0 
\end{bmatrix}, 
\begin{bmatrix}
0&1&0&0&0\\ *&0&1&0&1\\0&1&0&1&0\\ 1&0&1&0&0\\ 1&0&0&0&0 
\end{bmatrix}, 
\begin{bmatrix}
0&1&0&0&0\\ *&0&*&0&1\\0&1&0&1&0\\ 1&0&1&0&0\\ 1&0&1&0&0 
\end{bmatrix}, 
\right. \\
\left.
\begin{bmatrix}
0&1&0&0&0\\ *&0&1&0&1\\0&1&0&1&0\\ *&0&1&0&1\\ 1&0&0&0&0 
\end{bmatrix}, 
\begin{bmatrix}
0&1&0&0&0\\ 1&0&*&0&1\\0&1&0&1&0\\ 1&0&*&0&1\\ 0&0&1&0&0 
\end{bmatrix}, 
\begin{bmatrix}
0&1&0&0&0\\ 1&0&1&0&0\\0&1&0&1&0\\ *&0&*&0&1\\ 1&0&1&0&0 
\end{bmatrix}, 
\begin{bmatrix}
0&1&0&0&0\\ *&0&*&0&1\\0&1&0&1&0\\ *&0&*&0&1\\ 1&0&1&0&0 
\end{bmatrix},
\right. \\
\left. 
\begin{bmatrix}
0&1&0&0&0\\ 1&0&*&0&1\\0&1&0&1&0\\ 1&0&1&0&0\\ 0&0&1&0&0 
\end{bmatrix},
\begin{bmatrix}
0&1&0&0&0\\ 1&0&1&0&0\\0&*&0&1&1\\ 1&0&1&0&0\\ 0&1&0&0&0 
\end{bmatrix}, 
\begin{bmatrix}
0&1&0&0&0\\ 1&0&1&0&0\\0&*&0&*&1\\ 1&0&1&0&0\\ 0&1&0&1&0 
\end{bmatrix}, 
\begin{bmatrix}
0&1&0&0&0\\ 1&0&1&0&0\\0&1&0&*&1\\ 1&0&1&0&0\\ 0&0&0&1&0 
\end{bmatrix},
\right. \\
\left.
\begin{bmatrix}
0&1&0&0&0\\ 1&0&1&0&0\\0&1&0&1&0\\ *&0&1&0&1\\ 1&0&0&0&0 
\end{bmatrix}, 
\begin{bmatrix}
0&1&0&0&0\\ 1&0&1&0&0\\0&1&0&1&0\\ 1&0&*&0&1\\ 0&0&1&0&0 
\end{bmatrix}
 \right\} 
\end{split}
\end{align*}
where $*$ means either 0 or 1. In total there are 139 graphs in $\G_5$ which delete and repair down to $G_1$ (that contain at least one permutation). Listed below are the permutations in $\S_5$ that project down to $(1234)$ or $(12)(34)$,
\begin{align*}
\{(12345),(12354),(12534),(15234),(1234)(5),(125)(34),(152)(34),(12)(345),(12)(354),(12)(34)(5)\}.
\end{align*}
Going through each $\sigma'$ in the above set in the order listed above and computing the sum
\begin{align*}
\sum_{G' \in (\varphi^\text{dr}_4)^{-1}(G_1)} \beta^{\# G'} \textbf{1}_{\{\sigma' \subset G'\}},
\end{align*}
will show that in this case, the right hand side of (\ref{norm}) becomes
\begin{align*}
&= \frac{1}{\beta_4^7 \cdot (\alpha_4+\alpha_4^2)}\bigg[ \alpha_5(\beta_5^{9}+2\beta_5^{10}+\beta_5^{11}+\beta_5^9+2\beta_5^{10}+\beta_5^{11}+\beta_5^9+3\beta_5^{10}+3\beta_5^{11}+\beta_5^{12}+\beta_5^{9}+\beta_5^{10}+\beta_5^8\\
&+2\beta_5^9+\beta_5^{10}+\beta_5^8+2\beta_5^9+\beta_5^{10}+\beta_5^8+3\beta_5^9+3\beta_5^{10}+\beta_5^{11}+\beta_5^8+\beta_5^9) + \alpha_5(\beta_5^9+2\beta_5^{10}+\beta_5^{11}+\beta_5^9+\beta_5^{10}\\
&+\beta_5^8+2\beta_5^9+\beta_5^{10}+\beta_5^8+\beta_5^9)+\alpha_5(\beta_5^9+2\beta_5^{10}+\beta_5^{11}+\beta_5^9+2\beta_5^{10}+\beta_5^{11}+\beta_5^9+3\beta_5^{10}+3\beta_5^{11}+\beta_5^{12}+\beta_5^9\\
&+\beta_5^{10}+\beta_5^8+2\beta_5^9+\beta_5^{10}+\beta_5^8+2\beta_5^9+\beta_5^{10}+\beta_5^8+3\beta_5^9+3\beta_5^{10}+\beta_5^{11}+\beta_5^8+\beta_5^9)+\alpha_5(\beta_5^9+\beta_5^{10}+\beta_5^9\\
&+2\beta_5^{10}+\beta_5^{11}+\beta_5^8+\beta_5^9+\beta_5^8+2\beta_5^{9}+\beta_5^{10})+\alpha_5^2(\beta_5^8+4\beta_5^9+6\beta_5^{10}+4\beta_5^{11}+\beta_5^{12}+4\beta_5^9+6\beta_5^{10}+4\beta_5^{11}\\
&+\beta_5^{12}+\beta_5^{10}+\beta_5^{10}+\beta_5^{11}+\beta_5^{9}+\beta_5^{10}+\beta_5^{10}+\beta_5^{11}+\beta_5^{10}+\beta_5^{11}+\beta_5^{10}+\beta_5^{11}+\beta_5^{10}+\beta_5^{11}+\beta_5^{10}+2\beta_5^{11}\\
&+\beta_5^{12}+\beta_5^{10}+\beta_5^{9}+\beta_5^{10}+\beta_5^{10}+\beta_5^{11}+\beta_5^{10}+\beta_5^{10}+\beta_5^{9}+\beta_5^{10})+\alpha_5^2(\beta_5^9+\beta_5^{10}+\beta_5^{9}+2\beta_5^{10}+\beta_5^{11}+\beta_5^{9}\\
&+2\beta_5^{10}+\beta_5^{11}+\beta_5^{9}+3\beta_5^{10}+3\beta_5^{11}+\beta_5^{12}+\beta_5^{8}+\beta_5^{9}+\beta_5^{8}+2\beta_5^{9}+\beta_5^{10}+\beta_5^{8}+2\beta_5^{9}+\beta_5^{10}+\beta_5^{8}+3\beta_5^{9}\\
&+3\beta_5^{10}+\beta_5^{11})+\alpha_5^2(\beta_5^9+\beta_5^{10}+\beta_5^{9}+2\beta_5^{10}+\beta_5^{11}+\beta_5^{8}+\beta_5^{9}+\beta_5^{8}+2\beta_5^{9}+\beta_5^{10}) +\alpha_5^2(\beta_5^9+2\beta_5^{10}+\beta_5^{11}\\
&+\beta_5^{9}+2\beta_5^{10}+\beta_5^{11}+\beta_5^{9}+3\beta_5^{10}+3\beta_5^{11}+\beta_5^{12}+\beta_5^{9}+\beta_5^{10}+\beta_5^{8}+2\beta_5^{9}+\beta_5^{10}+\beta_5^{8}+2\beta_5^{9}+\beta_5^{10}+\beta_5^{8}\\
&+3\beta_5^{9}+3\beta_5^{10}+\beta_5^{11}+\beta_5^{8}+\beta_5^{9})+\alpha_5^2(\beta_5^9+2\beta_5^{10}+\beta_5^{11}+\beta_5^{9}+\beta_5^{10}+\beta_5^{8}+2\beta_5^{9}+\beta_5^{10}+\beta_5^{8}+\beta_5^{9})\\
&+\alpha_5^3(\beta_5^8+4\beta_5^{9}+6\beta_5^{10}+4\beta_5^{11}+\beta_5^{12}+4\beta_5^{9}+6\beta_5^{10}+4\beta_5^{11}+\beta_5^{12}+\beta_5^{10}+\beta_5^{10}+\beta_5^{11}+\beta_5^{10}+\beta_5^{10}+\beta_5^{11}\\
&+\beta_5^{10}+\beta_5^{11}+\beta_5^{10}+\beta_5^{11}+\beta_5^{10}+\beta_5^{11}+\beta_5^{10}+2\beta_5^{11}+\beta_5^{12}+\beta_5^{9}+\beta_5^{10}+\beta_5^{9}+\beta_5^{10}+\beta_5^{10}+\beta_5^{11}+\beta_5^{10}\\
&+\beta_5^{9}+\beta_5^{10}+\beta_5^{10}) \bigg].
\end{align*}
Simplifying, the above becomes
\begin{align}
\label{g1drn}
&= \frac{1}{\beta_4^7 \cdot (\alpha_4+\alpha_4^2)}\bigg[ \alpha_5 (12\beta_5^8+34\beta_5^9 + 34\beta_5^{10}+14\beta_5^{11}+2\beta_5^{12})+\alpha_5^2(13\beta_5^8+45\beta_5^9+60\beta_5^{10}+30\beta_5^{11}+5\beta_5^{12}) \\
&+ \alpha_5^3(\beta_5^{8}+11\beta_5^9 + 26\beta_5^{10} + 16\beta_5^{11}+3\beta_5^{12}) \bigg]. \nonumber
\end{align}

The set of $G' \in \G_5$ for which $\varphi^\text{dr}_4(G')=G_2$ (and also contain at least one permutation) is
\begin{align*}
(\varphi^\text{dr}_4)^{-1}(G_2) = 
\begin{split} 
\left\{ \begin{bmatrix}
0&1&0&0&0\\0&0&1&0&0\\1&1&0&1&0\\ 1&0&0&1&0\\ *&*&*&*&1 
\end{bmatrix}, 
\begin{bmatrix}
0&1&0&0&*\\0&0&1&0&*\\1&1&0&1&*\\ 1&0&0&1&*\\ 0&0&0&0&1 
\end{bmatrix}, 
\begin{bmatrix}
0&*&0&0&1\\0&0&1&0&0\\1&1&0&1&0\\ 1&0&0&1&0\\ 0&1&0&0&1 
\end{bmatrix}, 
\begin{bmatrix}
0&*&0&0&1\\0&0&1&0&0\\1&*&0&1&1\\ 1&0&0&1&0\\ 0&1&0&0&1 
\end{bmatrix}, 
\right. \\
\left. 
\begin{bmatrix}
0&1&0&0&0\\0&0&*&0&1\\1&1&0&1&0\\ 1&0&0&1&0\\ 0&0&1&0&1 
\end{bmatrix}, 
\begin{bmatrix}
0&1&0&0&0\\0&0&1&0&0\\ *&1&0&1&1\\ 1&0&0&1&0\\ 1&0&0&0&1 
\end{bmatrix}, 
\begin{bmatrix}
0&1&0&0&0\\0&0&1&0&0\\ *&*&0&1&1\\ 1&0&0&1&0\\ 1&1&0&0&1 
\end{bmatrix}, 
\begin{bmatrix}
0&1&0&0&0\\0&0&1&0&0\\ 1&*&0&*&1\\ 1&0&0&1&0\\ 0&1&0&1&1 
\end{bmatrix}, 
\right. \\
\left.
\begin{bmatrix}
0&1&0&0&0\\0&0&1&0&0\\ *&1&0&*&1\\ 1&0&0&1&0\\ 1&0&0&1&1 
\end{bmatrix}, 
\begin{bmatrix}
0&1&0&0&0\\0&0&1&0&0\\ *&*&0&*&1\\ 1&0&0&1&0\\ 1&1&0&1&1 
\end{bmatrix}, 
\begin{bmatrix}
0&1&0&0&0\\0&0&1&0&0\\ *&1&0&1&1\\ *&0&0&1&1\\ 1&0&0&0&1 
\end{bmatrix}, 
\begin{bmatrix}
0&1&0&0&0\\0&0&1&0&0\\ 1&1&0&*&1\\ 1&0&0&*&1\\ 0&0&0&1&1 
\end{bmatrix}, 
\right. \\
\left.
\begin{bmatrix}
0&1&0&0&0\\0&0&1&0&0\\ 1&1&0&1&0\\ *&0&0&*&1\\ 1&0&0&1&1 
\end{bmatrix}, 
\begin{bmatrix}
0&1&0&0&0\\0&0&1&0&0\\ *&1&0&*&1\\ *&0&0&*&1\\ 1&0&0&1&1 
\end{bmatrix}, 
\begin{bmatrix}
0&1&0&0&0\\0&0&1&0&0\\ 1&*&0&1&1\\ 1&0&0&1&0\\ 0&1&0&0&1 
\end{bmatrix}, 
\begin{bmatrix}
0&1&0&0&0\\0&0&1&0&0\\ 1&1&0&*&1\\ 1&0&0&1&0\\ 0&0&0&1&1 
\end{bmatrix}, 
\right. \\
\left.
\begin{bmatrix}
0&1&0&0&0\\0&0&1&0&0\\ 1&1&0&1&0\\ *&0&0&1&1\\ 1&0&0&0&1 
\end{bmatrix}, 
\begin{bmatrix}
0&1&0&0&0\\0&0&1&0&0\\ 1&1&0&1&0\\ 1&0&0&*&1\\ 0&0&0&1&1 
\end{bmatrix}, 
\begin{bmatrix}
0&*&0&0&1\\0&0&1&0&0\\ 1&1&0&1&0\\ 1&0&0&1&0\\ 0&1&0&0&0 
\end{bmatrix},
\begin{bmatrix}
0&*&0&0&1\\0&0&1&0&0\\ 1&*&0&1&1\\ 1&0&0&1&0\\ 0&1&0&0&0 
\end{bmatrix},
\right. \\
\left.
\begin{bmatrix}
0&1&0&0&0\\0&0&*&0&1\\ 1&1&0&1&0\\ 1&0&0&1&0\\ 0&0&1&0&0 
\end{bmatrix},
\begin{bmatrix}
0&1&0&0&0\\0&0&1&0&0\\ *&1&0&1&1\\ 1&0&0&1&0\\ 1&0&0&0&0 
\end{bmatrix},
\begin{bmatrix}
0&1&0&0&0\\0&0&1&0&0\\ *&*&0&1&1\\ 1&0&0&1&0\\ 1&1&0&0&0 
\end{bmatrix},
\begin{bmatrix}
0&1&0&0&0\\0&0&1&0&0\\ 1&*&0&*&1\\ 1&0&0&1&0\\ 0&1&0&1&0 
\end{bmatrix},
\right. \\
\left.
\begin{bmatrix}
0&1&0&0&0\\0&0&1&0&0\\ *&1&0&*&1\\ 1&0&0&1&0\\ 1&0&0&1&0 
\end{bmatrix},
\begin{bmatrix}
0&1&0&0&0\\0&0&1&0&0\\ *&*&0&*&1\\ 1&0&0&1&0\\ 1&1&0&1&0 
\end{bmatrix},
\begin{bmatrix}
0&1&0&0&0\\0&0&1&0&0\\ *&1&0&1&1\\ *&0&0&1&1\\ 1&0&0&0&0 
\end{bmatrix},
\begin{bmatrix}
0&1&0&0&0\\0&0&1&0&0\\ 1&1&0&1&0\\ *&0&0&*&1\\ 1&0&0&1&0 
\end{bmatrix},
\right. \\
\left.
\begin{bmatrix}
0&1&0&0&0\\0&0&1&0&0\\ 1&1&0&*&1\\ 1&0&0&*&1\\ 0&0&0&1&0 
\end{bmatrix},
\begin{bmatrix}
0&1&0&0&0\\0&0&1&0&0\\ *&1&0&*&1\\ *&0&0&*&1\\ 1&0&0&1&0 
\end{bmatrix},
\begin{bmatrix}
0&1&0&0&0\\0&0&1&0&0\\ 1&*&0&1&1\\ 1&0&0&1&0\\ 0&1&0&0&0 
\end{bmatrix},
\begin{bmatrix}
0&1&0&0&0\\0&0&1&0&0\\ 1&1&0&*&1\\ 1&0&0&1&0\\ 0&0&0&1&0 
\end{bmatrix},
\right. \\
\left.
\begin{bmatrix}
0&1&0&0&0\\0&0&1&0&0\\ 1&1&0&1&0\\ *&0&0&1&1\\ 1&0&0&0&0 
\end{bmatrix},
\begin{bmatrix}
0&1&0&0&0\\0&0&1&0&0\\ 1&1&0&1&0\\ 1&0&0&*&1\\ 0&0&0&1&0 
\end{bmatrix}
 \right\} 
\end{split}
\end{align*}
where $*$ means either 0 or 1. In total there are 163 graphs in $\G_5$ which delete and repair down to $G_2$ (that contain at least one permutation). Listed below are the permutations in $\S_5$ that project down to $(1234)$ or $(123)(4)$,
\begin{align*}
\{(12345),(12354),(12534),(15234),(1234)(5),(1235)(4),(1253)(4),(1523)(4),(123)(45),(123)(4)(5)\}.
\end{align*}
Going through each $\sigma'$ in the above set in the order listed above and computing the sum
\begin{align*}
\sum_{G' \in (\varphi^\text{dr}_4)^{-1}(G_1)} \beta^{\# G'} \textbf{1}_{\{\sigma' \subset G'\}},
\end{align*}
will show that in this case, the right hand side of (\ref{norm}) becomes
\begin{align*}
&= \frac{1}{\beta_4^7 \cdot (\alpha_4+\alpha_4^2)}\bigg[ \alpha_5(\beta_5^9+2\beta_5^{10}+\beta_5^{11}+\beta_5^{9}+2\beta_5^{10}+\beta_5^{11}+\beta_5^{9}+3\beta_5^{10}+3\beta_5^{11}+\beta_5^{12}+\beta_5^{9}+\beta_5^{10}+\beta_5^{8}\\
&+2\beta_5^{9}+\beta_5^{10}+\beta_5^{8}+2\beta_5^{9}+\beta_5^{10}+\beta_5^{8}+3\beta_5^{9}+3\beta_5^{10}+\beta_5^{11}+\beta_5^{8}+\beta_5^{9})+\alpha_5(\beta_5^9+2\beta_5^{10}+\beta_5^{11}+\beta_5^{9}+2\beta_5^{10}\\
&+\beta_5^{11}+\beta_5^{9}+3\beta_5^{10}+3\beta_5^{11}+\beta_5^{12}+\beta_5^{9}+2\beta_5^{10}+\beta_5^{11}+\beta_5^{9}+3\beta_5^{10}+3\beta_5^{11}+\beta_5^{12}+\beta_5^{9}+\beta_5^{10}+\beta_5^{8}+2\beta_5^{9}\\
&+\beta_5^{10}+\beta_5^{8}+2\beta_5^{9}+\beta_5^{10}+\beta_5^{8}+3\beta_5^{9}+3\beta_5^{10}+\beta_5^{11}+\beta_5^{8}+2\beta_5^{9}+\beta_5^{10}+\beta_5^{8}+3\beta_5^{9}+3\beta_5^{10}+\beta_5^{11}+\beta_5^{8}+\beta_5^{9})\\
&+\alpha_5(\beta_5^9+\beta_5^{10}+\beta_5^{8}+\beta_5^{9})+\alpha_5(\beta_5^9+\beta_5^{10}+\beta_5^{9}+2\beta_5^{10}+\beta_5^{11}+\beta_5^{8}+\beta_5^{9}+\beta_5^{8}+2\beta_5^{9}+\beta_5^{10})+\alpha_5^2(\beta_5^8+4\beta_5^{9}\\
&+6\beta_5^{10}+4\beta_5^{11}+\beta_5^{12}+4\beta_5^{9}+6\beta_5^{10}+4\beta_5^{11}+\beta_5^{12}+\beta_5^{10}+\beta_5^{10}+\beta_5^{11}+\beta_5^{10}+\beta_5^{9}+\beta_5^{10}+\beta_5^{9}+2\beta_5^{10}+\beta_5^{11}\\
&+\beta_5^{10}+\beta_5^{11}+\beta_5^{10}+\beta_5^{11}+\beta_5^{10}+2\beta_5^{11}+\beta_5^{12}+\beta_5^{10}+\beta_5^{11}+\beta_5^{10}+\beta_5^{11}+\beta_5^{10}+\beta_5^{11}+\beta_5^{10}+2\beta_5^{11}+\beta_5^{12}\\
&+\beta_5^{9}+\beta_5^{10}+\beta_5^{10}+\beta_5^{10}+\beta_5^{9}+\beta_5^{10})+\alpha_5^2(\beta_5^{9}+\beta_5^{10}+\beta_5^{9}+2\beta_5^{10}+\beta_5^{11}+\beta_5^{9}+2\beta_5^{10}+\beta_5^{11}+\beta_5^{9}+3\beta_5^{10}\\
&+3\beta_5^{11}+\beta_5^{12}+\beta_5^{9}+2\beta_5^{10}+\beta_5^{11}+\beta_5^{9}+3\beta_5^{10}+3\beta_5^{11}+\beta_5^{12}+\beta_5^{8}+\beta_5^{9}+\beta_5^{8}+2\beta_5^{9}+\beta_5^{10}+\beta_5^{8}+2\beta_5^{9}+\beta_5^{10}\\
&+\beta_5^{8}+3\beta_5^{9}+3\beta_5^{10}+\beta_5^{11}+\beta_5^{8}+2\beta_5^{9}+\beta_5^{10}+\beta_5^{8}+3\beta_5^{9}+3\beta_5^{10}+\beta_5^{11})+ \alpha_5^2(\beta_5^9+\beta_5^{10}+\beta_5^{8}+\beta_5^{9})\\
&+\alpha_5^2(\beta_5^9+\beta_5^{10}+\beta_5^{9}+2\beta_5^{10}+\beta_5^{11}+\beta_5^{8}+\beta_5^{9}+\beta_5^{8}+2\beta_5^{9}+\beta_5^{10})+\alpha_5^2(\beta_5^9+2\beta_5^{10}+\beta_5^{11}+\beta_5^{9}+2\beta_5^{10}+\beta_5^{11}\\
&+\beta_5^{9}+3\beta_5^{10}+3\beta_5^{11}+\beta_5^{12}+\beta_5^{9}+\beta_5^{10}+\beta_5^{8}+2\beta_5^{9}+\beta_5^{10}+\beta_5^{8}+2\beta_5^{9}+\beta_5^{10}+\beta_5^{8}+3\beta_5^{9}+3\beta_5^{10}+\beta_5^{11}+\beta_5^{8}\\
&+\beta_5^{9})+\alpha_5^3(\beta_5^8+4\beta_5^{9}+6\beta_5^{10}+4\beta_5^{11}+\beta_5^{12}+4\beta_5^{9}+6\beta_5^{10}+4\beta_5^{11}+\beta_5^{12}+\beta_5^{10}+\beta_5^{10}+\beta_5^{11}+\beta_5^{10}+\beta_5^{10}\\
&+\beta_5^{10}+\beta_5^{11}+\beta_5^{9}+2\beta_5^{10}+\beta_5^{11}+\beta_5^{10}+\beta_5^{11}+\beta_5^{10}+2\beta_5^{11}+\beta_5^{12}+\beta_5^{10}+\beta_5^{11}+\beta_5^{10}+\beta_5^{11}+\beta_5^{10}+\beta_5^{11}\\
&+\beta_5^{10}+2\beta_5^{11}+\beta_5^{12}+\beta_5^{9}+\beta_5^{10}+\beta_5^{9}+\beta_5^{10}+\beta_5^{9}+\beta_5^{10}+\beta_5^{10}) \bigg] .
\end{align*}
Simplifying, the above becomes
\begin{align}
\label{g2drn}
&= \frac{1}{\beta_4^7 \cdot (\alpha_4+\alpha_4^2)}\bigg[ \alpha_5 (13\beta_5^8+38\beta_5^9 + 40\beta_5^{10}+18\beta_5^{11}+3\beta_5^{12})+\alpha_5^2(14\beta_5^8+50\beta_5^9+69\beta_5^{10}+37\beta_5^{11}+7\beta_5^{12}) \\
&+ \alpha_5^3(\beta_5^{8}+12\beta_5^9 + 29\beta_5^{10} + 19\beta_5^{11}+4\beta_5^{12}) \bigg]. \nonumber
\end{align}
Setting expressions (\ref{g1drn}) and (\ref{g2drn}) equal to each other, we have
\begin{align*}
0 = \alpha_5(\beta_5^8+4\beta_5^9+6\beta_5^{10}+4\beta_5^{11}+\beta_5^{12})+\alpha_5^2(\beta_5^8+5\beta_5^9+9\beta_5^{10}+7\beta_5^{11}+2\beta_5^{12})+\alpha_5^3(\beta_5^9+3\beta_5^{10}+3\beta_5^{11}+\beta_5^{12}).
\end{align*}
Since $\alpha_5,\beta_5 > 0$, the right hand side of the above equality is greater than 0, a contradiction.

Next, we prove inconsistency with respect to subselection. The equation (\ref{ltp}) can be rearranged as
\begin{align}
\label{newltp}
\frac{z_{n+1}(\alpha_{n+1},\beta_{n+1})}{z_n(\alpha_n,\beta_n)} \beta_n^{\# G} \sum_{\sigma} \alpha_n^{\# \sigma}\textbf{1}\{\sigma \subset G\} = \sum_{\sigma'} \alpha_{n+1}^{\# \sigma'} \sum_{G' : \varphi^{\text
{ss}}_n(G') = G} \beta_{n+1}^{G'} \textbf{1}_{\{\sigma'\subset G'\}}.
\end{align}
Grouping the inner summands on the right hand side according to the number of edges in $G'$, the right hand side is equal to
\begin{align*}
&= \sum_{\sigma'}\alpha_{n+1}^{\# \sigma'} \sum_{m=\# G+1}^{\# G+2n+1} \beta_{n+1}^m \sum_{G' : \varphi^{\text{ss}}_n(G')=G, \# G' = m} \textbf{1}_{\{\sigma'\subset G'\}},
\end{align*}
since any $G'$ for which $\varphi^{\text{ss}}_n(G')=G$ has at most $\# G + 2n+1$ edges (if node $n+1$ is connected to itself and all other nodes), and at least $\# G + 1$ edges for some $\sigma' \in \S_{n+1}$ to be contained in $G'$. The above can be split into two sums,
\begin{align*}
&= \sum_{\sigma': \sigma'(n+1)=n+1}\alpha_{n+1}^{\# \sigma'} \sum_{m=\# G+1}^{\# G+2n+1} \beta_{n+1}^m \binom{2n}{m-(\# G+1)} + \sum_{\sigma': \sigma'(n+1)\neq n+1}\alpha_{n+1}^{\# \sigma'} \sum_{m=\# G+2}^{\# G+2n+1} \beta_{n+1}^m \binom{2n-1}{m-(\# G+2)},
\end{align*}
since when $\sigma' (n+1) = n+1$ and $\sigma'\subset G'$, there are $2n$ unconstrained entries in $G'$ which can be either zero or one. When $\sigma'(n+1)\neq n+1$ and $\sigma' \subset G'$, there are $2n-1$ unconstrained entries, because $G'_{\sigma^{-1}(n+1),n+1} = G'_{n+1,\sigma(n+1)}=1$ are fixed. In both cases, the top left $n \times n$ submatrix is constrained to be exactly equal to $G$ in order to have $\varphi^{\text{ss}}_n(G')=G$. By the binomial formula, the above becomes
\begin{align*}
&= \beta_{n+1}^{\# G + 1}(1+\beta_{n+1})^{2n} \sum_{\sigma': \sigma'(n+1)=n+1}\alpha_{n+1}^{\# \sigma'} + \beta_{n+1}^{\# G + 2} (1+\beta_{n+1})^{2n-1}\sum_{\sigma': \sigma'(n+1)\neq n+1}\alpha_{n+1}^{\# \sigma'} \\
&= \beta_{n+1}^{\# G + 1}(1+\beta_{n+1})^{2n} \alpha_{n+1}\left( \alpha_{n+1}\right)_{n\uparrow 1} + \beta_{n+1}^{\# G + 2} (1+\beta_{n+1})^{2n-1}n (\alpha_{n+1})_{n\uparrow 1} ,
\end{align*}
where in the last equality we have used Lemma \ref{crpperm}. Combined with (\ref{newltp}), we have shown
\begin{align}
\label{consting}
\frac{z_{n+1}(\alpha_{n+1},\beta_{n+1})}{z_n(\alpha_n,\beta_n)} \left(\frac{\beta_n}{\beta_{n+1}}\right)^{\# G} \sum_{\sigma}\alpha_n^{\# \sigma} \textbf{1}_{\{\sigma \subset G\}} = \beta_{n+1}(1+\beta_{n+1})^{2n}(\alpha_{n+1})_{n\uparrow 1}\left[ \alpha_{n+1} + \frac{n\beta_{n+1}}{1+\beta_{n+1}}  \right].
\end{align}
In particular, the above equality implies that the function $f : \{0,1\}^{n\times n} \to \R$ defined by
\begin{align*}
f(G) \doteq \left(\frac{\beta_n}{\beta_{n+1}}\right)^{\# G} \sum_{\sigma}\alpha_n^{\# \sigma} \textbf{1}_{\{\sigma \subset G\}},
\end{align*}
is constant in $G$. Taking $G_1 = (12\dots n)$ and $G_2 = (1)(23\dots n)$, the requirement $f(G_1)=f(G_2)$ implies that $\alpha_n = \alpha_n^2$ for every $n$. Since we must have $\alpha_n > 0$ for the probabilities (\ref{per}) to be valid, this implies that $\alpha_n \equiv 1$, so that
\begin{align*}
f(G) = \left(\frac{\beta_n}{\beta_{n+1}}\right)^{\# G} \sum_{\sigma} \textbf{1}_{\{\sigma \subset G\}}.
\end{align*}
Taking $G_1 = (12\dots n)$ and $G_2 = (12\dots n)\vee (n)$, the equation $f(G_1)=f(G_2)$ becomes
\begin{align*}
\left(\frac{\beta_n}{\beta_{n+1}}\right)^{n} = \left(\frac{\beta_n}{\beta_{n+1}}\right)^{n+1} \Rightarrow \beta_n \equiv \beta
\end{align*}
is constant in $n$. Since $\alpha_n \equiv 1$ and $\beta_n \equiv \beta > 0$, (\ref{consting}) becomes
\begin{align*}
\frac{z_{n+1}(1,\beta)}{z_n(1,\beta)} \sum_{\sigma} \textbf{1}_{\{\sigma \subset G\}} = \beta(1+\beta)^{2n}\left[1+\frac{n\beta}{1+\beta} \right],
\end{align*}
for every $G$.
Plugging in the formula for the normalization constant gives
\begin{align*}
\frac{\beta}{1+\beta} (1+\beta)^{2n+1} \sum_{\sigma} \textbf{1}_{\{\sigma \subset G\}} = \beta (1+\beta)^{2n}\frac{\beta(n+1)+1}{1+\beta}.
\end{align*}
Simplifying, the above implies that a necessary condition of subselection consistency is that for every $G \in \{0,1\}^{n \times n}$,
\begin{align*}
\sum_{\sigma} \textbf{1}_{\{\sigma \subset G\}} = 1 + \frac{n\beta}{1+\beta}.
\end{align*}
Since the left hand side is not constant in $G$, we reach a contradiction.

\end{proof}

\newpage


\begin{thebibliography}{9}

  
 {\bibitem{blackwell}
Blackwell, D. and MacQueen, J.(1973),
  \textit{``Ferguson distributions via Pólya urn schemes.''},
 Annals of Statistics, 1:353–355}   
 


{\bibitem{networksbook}
	Crane, H. (2018),
  \textit{“Probabilistic Foundations of Statistical Network Analysis.”},
 volume 157
of Monographs on Statistics and Applied Probability. CRC Press, Boca Raton, FL.}

{\bibitem{permparts}
Crane, H. (2013),
  \textit{``Permanental partition and Markovian Gibbs structures.''},
J. Stat. Phys.} 


{\bibitem{ubiq}
Crane, H.(2016), 
  \textit{``The Ubiquitous Ewens Sampling Formula (with discussion and a rejoinder by the author).''},
Statistical Science, 31, 1–39.}   
 
 {\bibitem{mccullaghmoller}
McCullagh, P. and Møller, J (2006),
  \textit{``The permanental process.''},
 Advances in Applied Probability 38:873-888}   
  
  
  
 {\bibitem{pmcc}
McCullagh, P. (2011),
  \textit{``Random permutations and partition models.''},
  International Encyclopedia
of Statistical Science 1170–1177. Springer, Berlin.}     
  

 {\bibitem{twocont}
McCullagh, P. (2016),
  \textit{``Two Early Contributions to the Ewens Saga.'},
Statistical Science, Vol. 31, No. 1, 23–26}       


{\bibitem{pitman}
Pitman, J. (2002),
  \textit{“Combinatorial Stochastic Processes. Lecture Notes for St. Flour Summer School.”},
Springer-Verlag, New York, NY.}


{\bibitem{shalizi}
Shalizi, C. R. and Rinaldo, A. (2013),
  \textit{``Consistency Under Sampling of Exponential Random Graph Models.''},
  Annals of Statistics, 41 (2), 508–535.}
  

{\bibitem{tavare}
Tavare, S. (2020),
  \textit{``The magical Ewens Sampling Formula''},
Submitted to the London Mathematical Society.}
    











\end{thebibliography}
\end{document}